\newtheorem{theorem}{Theorem}
\newtheorem{lemma}{Lemma}
\newtheorem{definition}{Definition}
\newtheorem{proposition}{Proposition}
\newdefinition{rmk}{Remark}
\def\bei{\begin{itemize}}
\def\eei{\end{itemize}}
\def\beq{\begin{equation}}
\def\eeq{\end{equation}}
\def\bed{\begin{definition}}
\def\eed{\end{definition}}
\def\bet{\begin{theorem}}
\def\eet{\end{theorem}}
\def\bel{\begin{lemma}}
\def\eel{\end{lemma}}
\def\bep{\begin{problem}}
\def\eep{\end{problem}}
\def\bec{\begin{corollary}}
\def\eec{\end{corollary}}
\def\ack{\noindent{\bf{Acknowledgement.$\,\,$}}}
\newtheorem{corollary}{Corollary}
\def\E{{\ensuremath{\mathbb {E}}}}
\def\R{\ensuremath{\mathbb {R}}}
\def\Pp{\ensuremath{\mathbb {P}}}
\def\eE{\mathcal{E}}
\def\eM{\mathcal{M}}
\def\eB{\mathcal{B}}
\def\tow{{\stackrel{w}{\longrightarrow}}}
\def\({\left(}
\def\){\right)}
\def\al{\alpha}
\def\be{\beta}
\def\si{\sigma}
\def\la{\lambda}
\def\iy{\infty}
\def\Mix{{\operatorname{Mix}}}
\def\ga{\gamma}
\def\cB{{\mathfrak B}}
\def\mC2#1{{\mathbf C}^{(2)}(#1)}
\def\mC#1#2{{\mathbf C}^{(#1)}(#2)}
\def\cF{{\mathfrak F}}
\def\cM{{\mathfrak M}}
\def\bydef{:=}
\def\t{\tau}
\def\d{\,\mathrm{d}}
\def\text#1{\mbox{ #1 }}
\def\i0t{\int_0^t}
\def\gh{\hat g}
\def\beq{\begin{equation}}
\def\eeq{\end{equation}}
\begin{document}
\title{The Generalized Shiryaev's Problem \\ and
\\ Skorohod Embedding\tnoteref{trf}}
\author[sj]{Sebastian Jaimungal\fnref{t2}}
\ead{sebastian.jaimungal@utoronto.ca}
\author[ak]{Alexander Kreinin\corref{cor1}}
\ead{alex.kreinin@algorithmics.com}
\author[sj]{Angel Valov}
\ead{valov@utoronto.ca}
\address[sj]{Department of Statistics, University of Toronto;
100 St. George Street, Toronto,\\ ON, , Canada.}
\address[ak]{Quantitative Research, Algorithmics Inc.; 185 Spadina Ave,
Toronto, ON, M5T 2C6, Canada.}

\cortext[cor1]{Corresponding author}
\tnotetext[trf]{This research was
supported in part by the Natural Sciences and Engineering
Research Council (NSERC) of Canada.}
\fntext[t2]{URL: http://www.stat.toronto.edu/~}

\begin{abstract}
In this paper we consider a connection between the famous Skorohod embedding
problem and the Shiryaev inverse problem for the  first hitting time distribution of a  Brownian motion: given a probability distribution, $F$, find a boundary such that the first hitting time
distribution is $F$. By randomizing the initial state of  the process  we show that the inverse problem becomes analytically tractable. The  randomization of the initial state allows us to significantly extend the class of target distributions in the case of a linear boundary and moreover allows us to establish connection with the Skorohod embedding problem.
\end{abstract}
\begin{keyword} First hitting time\sep Shiryaev's
inverse boundary problem\sep Skorohod embedding
\end{keyword}

%{\bf Primary Subjects:} 60G40
%
%{\bf Secondary Subjects:} 91B70
\maketitle

\section{Introduction}
Let $X_t$ be an arbitrary process with the left-continuous sample paths
and let $b(t)$ be  a continuous absorbing boundary
satisfying the condition $X_0 \ge b(0)$.  The random variable
\[
    \t = \begin{cases} \inf\{t \ge 0:\, X_t < b(t)\},
                  &\text{if there exists $t$ such that $X_t<b(t)$, }\\
               \infty, &\text{otherwise,}\end{cases}
\]
is called the first hitting time for the process $X_t$.
The problem
\begin{eqnarray}
& &\text{\it Given a process, $X_t$, and the boundary,
$b(t)$, } \label{probl_1} \\
& &\text{\it find the distribution
$ F_\t(t)=\Pp\(\t\le t\)$.} \nonumber
\end{eqnarray}
is the starting point of a very  rich research area in the theory of stochastic processes.
If $X_t$ is a diffusion process, the problem of finding the distribution
of $\t$ is a classical one. The first papers on the problem were published by
P.~Levy, A.~Khintchine and A.~Kolmogorov in the 1920s. These results are discussed in the
monograph\footnote{Khintchine called (\ref{probl_1})
the second problem of diffusion.}  \citet{khint}.
Khintchine  gave a complete solution of the problem for sufficiently smooth boundaries.
This solution was expressed in
terms of a boundary value problem for the associated partial differential operator (infinitesimal generator).
Since then many books and research
papers were published in this area. The monograph
\citet{Lerch} summarizes known analytical results obtained by the mid-1980s.
In \citet{Karatz} the link between analytical methods and the martingale
approach is considered, the paper \citet{Durb} discusses computational aspects of the problem,
while the Taylor expansions of the probability distribution of $\t$ are
considered in \citet{HobWil}.

The following,  inverse  to~(\ref{probl_1}), problem was proposed
by Albert Shiryaev in the mid-$70$'s in his Banach Center lectures:
\begin{eqnarray}
& &\text{\it Given a process, $X_t$, and a distribution,
$F(t)$, } \label{probl_AS} \\
& &\text{\it find a boundary, $b(t)$, such that
$ \Pp\(\t\le t\)=F(t)$.} \nonumber
\end{eqnarray}
 Problem~(\ref{probl_AS}), to the best of our knowledge, was  proposed by
A. Shiryaev in the case when
$X_t$ is a Brownian motion and $F(t)$ is an exponential distribution.
If $X_t$ is a Brownian motion with a random initial value, i.e., the process
$X_t=\xi+W_t$ is a Brownian motion starting at a random point $\xi$,
we will refer to Problem~(\ref{probl_AS}) as a generalized Shiryaev's problem (GSP).

The third problem  considered in this paper is the  Skorohod embedding problem:
\begin{eqnarray}
& &\text{\it Given a probability measure, $\mu$,
with a finite second moment,  }\label{probl_Skor} \\
& &\text{\it and
a Wiener process, $W_t$, find an integrable stopping time, $\t_\ast$, }\nonumber \\
& &\text{\it such  that the distribution of $W_{\t_\ast}$ is $\mu$.}\nonumber
\end{eqnarray}
%% The Skorohod problem can be considered in a slightly more general setting by omitting
This problem has a long list of references. Here we mention only
the   original paper \cite{Skoroh} and a survey by \cite{obloj}
where many important applications and various solutions of the Skorohod embedding problem can be found.

It is intuitively appealing that there must be a connection between the
problems (\ref{probl_AS}) and
(\ref{probl_Skor}). Indeed if a monotone continuous function $b_\ast(\cdot)$ solves
Problem~(\ref{probl_AS}) for a Wiener process $W_t$, and the
distribution function $F(t)$,
then the  first hitting time, $\t_\ast$, is a solution of the Skorohod problem with
$\Pp\( W_{\t_\ast}\le t\)= F\( b_\ast^{-1}(t)\)$. The only problem with the latter condition
is that $b_\ast(\cdot)$ is usually unknown in analytical form.
Moreover, the boundary might not belong to the class of monotone functions.

Models using the first hitting time distribution find extensive application in
the areas of portfolio credit risk modeling (see \citet{IsKr}, \citet{IsKrRos},
\cite{Novikov08}) and pricing of credit
derivatives (\citet{Avel}, \citet{HW}). In these contexts the process represents the so-called
{\it distance to default} of an obligor (see \citet{Avel}),
while the first hitting time represents a default event.
The boundary therefore acts a barrier separating the healthy states of the obligor from the default state.
For this reason, the boundary, $b(t)$, is often called the {\it default boundary} in
the applied literature on credit risk modeling. In particular, an interesting
model of default events with a randomized boundary was proposed in \cite{Novikov08}.

The inverse problem (\ref{probl_AS}) was considered in
\citet{IsKrRos} when the process $X_t$ is a Brownian random walk\footnote{A Brownian random walk
is a discrete time process with Gaussian increments and variance proportional to the time step.}.
A detailed analysis of the inverse problem in the discrete time
setting  is given in \citet{IsKr}
as well as a Monte Carlo based solution.
Their approach is applicable to a much more general class of processes $X_t$,
not just Brownian random walks, and is computationally simple to implement.

Existence of the solution to the continuous-time inverse problem (\ref{probl_AS}) is analyzed in
\citet{Saun}. In \citet{Pesk}, an integral equation for the boundary is derived
when $X_t$ is a Brownian motion. A general analysis of the integral equations for the boundary
 and existence and uniqueness theorems are considered in \citet{JKV_integr}.
The randomized inverse
problem is considered in \citet{JKV_rand} and in the preliminary publication \citet{JKW}.

The Shiryaev  problem is notoriously  difficult and
analytical solutions are known only in a few cases only (see \cite{Shepp67}, \cite{Salm88},
\cite{Lerch}, \cite{PeskShir}, \cite{Breiman66}, \cite{Alili05},
\cite{Novikov81}, \cite{Pesk}). However, existence of the solution to the problem has been proven for an arbitrary target distribution, $F$, by \cite{Dudley} and by \cite{Anul}.
Unfortunately, an analog of this existence theorem
in its most general form cannot be proven for the {\it the Generalized Shiryaev's problem} and in fact counter-examples do exist (see \cite{JKV_rand} and Proposition 2 below). Nonetheless, the randomized version does admit closed form solutions for a large class of distributions of the first hitting time, $F$, including the gamma distribution (see \cite{JKW} and more generally \cite{JKV_rand}) as well as a subset of one-sided stable distributions (see Corollary 1 below).

One of our main goals is to establish a connection between the Generalized Shiryaev's problem and the Skorohod embedding problem. The remainder of this paper is organized as follows. In Section~\ref{sec_randomiz} we proved a new short derivation for the Laplace transform of the distribution of the first hitting time and the
distribution of the initial random position $\xi$ of the process $X_t$. When the boundary is linear, we
find solutions for a class of mixtures of
gamma distributions of the first hitting time and for a class of stable
distributions. In Section~\ref{sec_str_solut} we analyze the structure of the solutions
to the Shiryaev's problem and introduce the minimal solution which possess a very elegant
structure. The randomization of the initial state of the process allows us to stretch the boundary and transform it into a straight line. In this case we have a simple relation between the distributions of $\t$ and $X_\t$. This observation allows us to connect the Skorohod embedding problem to the Generalized Shiryaev problem as shown in Section~\ref{sec_conn_Skorohod}.

This paper is self-contained; it represents an extended version
of the talk given at the $5^{th}$ Bachellier colloquium, January 2011,  Metabief.

\ack{We are very grateful to Yuri Kabanov and Tom Sailsbury for
pointing out the appealing similarity
of the Problems under consideration and asking about the connection
between the inverse Problem~(\ref{probl_AS}) and the Skorohod embedding problem.
We would also like to thank Lane Hughston and Mark Davis for the interesting comments on earlier stages of this work. The authors are indebted  to Albert Shiryaev, Alexander Novikov,
Raphael Douady, Isaac Sonin and Nizar Touzi for the  interesting discussions during the $5^{th}$ Bachellier colloquium. }

\section{Randomization of the initial state}\label{sec_randomiz}
\subsection{General equation}
Let $\(\Omega, \cB, \cF_t, \Pp\)$ be a filtered probability space and let
$\Omega= {\mathbf{C}}\([0, \iy)\)\times\R^1$
 be the Cartesian product of the space of continuous functions on the positive semi-axis and $\R^1$.
Consider a random variable $\xi$ on this probability space.
We assume that $\{\cF_t\}=\{\cF_t^W\} \bigvee \eB(\R^1)$ is an augmented filtration
where  $\cF_t^W$ is the natural filtration, generated by the standard
Wiener process $W_t$ and
assume that $\cB = \bigcup_{t\ge 0}\limits \cF_t$.
We also  assume that  the random variable $\xi$ is independent
of the process $W_t$.

Let $X_t=\xi+W_t$ and let $\Pp\(0\le \xi < +\iy\)=1$.
Consider now the linear  boundary,  $b(t) = k \,t $, $k\ge 0$.
Denote the first hitting time of the process $X_t$ to $b(t)$  by $\t^{(k)}$, i.e.,
$$ \t^{(k)}=\inf\{t: t>0, \,\,\xi+W_t<k\,t\}. $$
The Generalized Shiryaev's Problem (GSP) for the process $X_t$ is formulated as follows:
\beq
\begin{split}
\text{\it Given  a distribution $F(t)$,
    find $\xi$, } \\
\text{\it such that $\Pp\(\t^{(k)}\le t\)=F(t)$. }
\end{split}
\label{probl_Id}
\eeq
The triplet $(\xi, \t^{(k)}, k)$ is called a solution of the Generalized Shiryaev's
Problem.

We are interested in the solutions to the GSP and their properties.
In particular, we are interested in the solution of Problem~(\ref{probl_Id})
when $F(t)$ belongs to the class of Gamma distributions, $F\in\Gamma_{\la, \gamma}$,
with the probability density function
$$ p_\gamma(t) = \la \cdot\frac{(\la \,t)^{\gamma -1}}{\Gamma(\gamma)}\cdot \exp(-\la \,t),
\qquad\gamma >0, \,\,\la>0. $$
\begin{rmk}
The case $\gamma=1$ corresponds to the original formulation of  Shiryaev's problem with a randomized initial point. In this case
 the distribution $F(t)=1-e^{-\la t}, \,\, t\ge 0$.
 \end{rmk}

\noindent To solve the problem, we will derive a connection between the Laplace transforms of the target distribution $F(t)$ and the initial starting point $\xi$. To this end, denote
\beq
    \hat f(s)= \E[e^{-s\t}]= \int_0^\iy e^{-st}\d F(t), \quad s\in \R_+,
\label{eq_def_fh}
\eeq
where, as usual, $\R_+=\{ s: s\ge 0\}$, is the set of non-negative real numbers,
and let $\hat g(s) = \E\bigl[ e^{-s\xi}\bigr]$.
The function $\hat f(s)$ on $[0, \iy)$, is completely monotone (see \cite{Feller}),
$\hat f\in \cM$,
where
$$
    \cM=\biggl\{ V(s): (-1)^n \frac{\d^n V(s)}{\d s^n}\ge 0,\,\, n=0, 1, \dots,
                       \,\, s\in\R_+\biggr\}.
$$
The class of completely monotone functions form an algebra: the sum and the product
of completely monotone functions belong to $\cM$. According to
the classical Bernstein's Theorem (see \cite{Feller}),
if $\hat f(s)\in\cM$ and $\hat f(0)=1$, then
there exists a cumulative distribution function, $F(t)$, satisfying (\ref{eq_def_fh}).

The following statement for   completely monotone functions
is very well known (see \cite{Feller}, Criterion $2$):
\begin{proposition}\label{prop_cm}
If $G(s)$ is a completely monotone function, $u(s)\ge 0$ and the first derivative
$u^\prime\in\cM$ then $G(u(s))\in\cM$.
\end{proposition}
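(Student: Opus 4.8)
The statement to prove is Proposition \ref{prop_cm}: if $G$ is completely monotone, $u \ge 0$, and $u' \in \cM$, then the composition $G \circ u$ is completely monotone. The plan is to use Bernstein's Theorem to get an integral representation of $G$ and then differentiate under the integral sign, reducing everything to the observation that a single exponential $s \mapsto e^{-\la u(s)}$ is completely monotone whenever $u \ge 0$ and $u' \in \cM$. Since $\cM$ is closed under (countable, monotone) convex combinations — which follows from the representation as mixtures of exponentials — the general case then follows by mixing over $\la$.

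First I would write, by Bernstein's Theorem, $G(s) = \int_{[0,\iy)} e^{-\la s}\, \d\mu(\la)$ for some positive measure $\mu$ on $\R_+$ (finite, since $G(0) < \iy$ — or one handles $G(0) = \iy$ by a truncation/limit argument). Then $G(u(s)) = \int_{[0,\iy)} e^{-\la u(s)}\, \d\mu(\la)$, so it suffices to show that $h_\la(s) := e^{-\la u(s)}$ lies in $\cM$ for each fixed $\la \ge 0$, because nonnegative mixtures of completely monotone functions are again completely monotone (each derivative can be differentiated under the integral sign, the sign $(-1)^n$ pulling out uniformly in $\la$). The second step is the heart of the matter: show $h_\la \in \cM$. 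I would argue that $-h_\la'(s) = \la\, u'(s)\, e^{-\la u(s)} = \la\, u'(s)\, h_\la(s)$, and since $u' \in \cM$ and (inductively) $h_\la \in \cM$, the product $u' \cdot h_\la$ is in $\cM$ because $\cM$ is an algebra (stated in the excerpt). Hence $-h_\la'$ is completely monotone, i.e. $h_\la' \in -\cM$; combined with $h_\la \ge 0$ this is precisely the statement that $h_\la$ is completely monotone. This last implication is a standard fact: a nonnegative function whose derivative is the negative of a completely monotone function is completely monotone (differentiating the relation $-h_\la' = u' h_\la$ repeatedly and using the algebra property of $\cM$ gives the alternating sign pattern for all derivatives by induction on $n$).

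The main obstacle is making the induction in the second step fully rigorous: the claim "$h_\la \in \cM$" is what we are trying to prove, so one cannot simply invoke it when showing $u' h_\la \in \cM$. The clean way around this is to prove by induction on $n$ the joint statement $(-1)^n h_\la^{(n)} \ge 0$ for all $n$, using Leibniz's rule on the identity $h_\la'(s) = -\la u'(s) h_\la(s)$: differentiating $n-1$ more times expresses $h_\la^{(n)}$ as a sum of terms $\pm\,\la\binom{n-1}{j} u^{(j+1)}(s)\, h_\la^{(n-1-j)}(s)$, and each factor has a known sign from $u' \in \cM$ and the inductive hypothesis on lower-order derivatives of $h_\la$, so all terms carry the sign $(-1)^n$. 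Once $h_\la \in \cM$ is established, mixing over $\mu$ and differentiating under the integral sign (justified by dominated convergence, using that $\mu$ is finite and the derivatives of $e^{-\la u(s)}$ are bounded locally uniformly in $\la$ on compact $s$-intervals) completes the proof.
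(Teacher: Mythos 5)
The paper does not prove Proposition~\ref{prop_cm}; it simply cites it as ``Criterion~2'' from Feller's monograph, so there is no in-text proof to compare against. Your argument is correct and is essentially the standard textbook proof that Feller gives: by Bernstein's representation $G(s)=\int_{[0,\iy)} e^{-\la s}\,\d\mu(\la)$ the problem reduces to showing that $h_\la(s)=e^{-\la u(s)}$ lies in $\cM$ for each fixed $\la\ge0$, and you establish this by induction on $n$. You also correctly spotted and repaired the circularity in the naive claim that $-h_\la'=\la\,u'\,h_\la\in\cM$ ``because $u',h_\la\in\cM$ and $\cM$ is an algebra'': the fix via Leibniz on the identity $h_\la'=-\la u' h_\la$ is exactly right, since each summand $-\la\binom{n-1}{j}u^{(j+1)}(s)\,h_\la^{(n-1-j)}(s)$ carries the sign $(-1)\cdot(-1)^{j}\cdot(-1)^{n-1-j}=(-1)^n$ by $u'\in\cM$ and the inductive hypothesis on lower-order derivatives of $h_\la$. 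Two small remarks: once the Leibniz induction delivers $(-1)^n h_\la^{(n)}\ge0$ for all $n$, the detour through ``$h_\la\ge0$ and $-h_\la'\in\cM$ implies $h_\la\in\cM$'' is redundant; and in the paper's actual applications $G$ is always a Laplace transform of a probability distribution, so $\mu$ is a probability measure and both the $G(0)=\iy$ caveat and the differentiation-under-the-integral step are routine.
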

\noindent
Let us now derive the main equation for the Laplace transforms of the distributions of
$\t$ and $\xi$.
\begin{theorem}\label{theor_LT_boundary}
If a random variable $\xi$ is a solution to the Problem~{\rm{(\ref{probl_Id})}}
then the function $\hat g(s)$ satisfies the equation
\beq
\gh(s) =\hat f\(sk + s^2/2 \).
\label{eq_g_hat}
\eeq
\end{theorem}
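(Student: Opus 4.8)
The plan is to exploit the strong Markov property and the fact that, conditionally on the initial position $\xi = x$, the process $X_t = x + W_t$ is an ordinary Brownian motion started at $x$. Fix $k \ge 0$ and let $\t^{(k)}$ be the first hitting time of the linear boundary $b(t) = kt$. The first step is to observe that hitting $b(t) = kt$ starting from $x$ is equivalent, after subtracting the drift, to a driftless reformulation: $\xi + W_t < kt \iff (\xi - kt) + W_t < 0$, i.e. the first time the Brownian motion $x + W_t$ with a superimposed deterministic drift $-kt$ reaches $0$. Equivalently, $\t^{(k)}$ conditioned on $\xi = x$ is the first passage time to level $0$ of $x + W_t - kt$, which is the first passage of $x + \tilde W_t$ where $\tilde W_t = W_t - kt$ is a Brownian motion with drift $-k$.

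The key computational step is the classical formula for the Laplace transform of the first passage time to $0$ of a drifted Brownian motion started at $x > 0$: for $s \ge 0$,
\beq
\E\bigl[ e^{-s \t^{(k)}} \,\big|\, \xi = x \bigr] = \exp\!\left( -x\bigl( k + \sqrt{k^2 + 2s}\bigr) \right).
\label{eq_cond_LT}
\eeq
This is obtained in the standard way: the process $M_t = \exp\bigl(-\theta (x + W_t - kt) - (\tfrac{1}{2}\theta^2 + \theta k) t\bigr)$ is a martingale for any $\theta$, one applies optional stopping at $\t^{(k)}$ (justified by boundedness up to the stopping time since the stopped process stays between $0$ and $e^{\theta x}$ for $\theta > 0$, together with the fact that $\t^{(k)} < \infty$ a.s.\ when $k \ge 0$), and then chooses $\theta = \theta(s)$ to solve $\tfrac12\theta^2 + \theta k = s$, i.e.\ $\theta = -k + \sqrt{k^2+2s}$, which gives $M_{\t^{(k)}} = \exp(-\theta x) e^{-s\t^{(k)}}$ with $X_{\t^{(k)}} = 0$ on the boundary. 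Actually, to match the exponent in \eqref{eq_g_hat} cleanly it is easier to reparametrize: write $s = \mu k + \mu^2/2$ for $\mu \ge 0$ and verify that the unique nonnegative root is $\mu = -k + \sqrt{k^2 + 2s}$, so that $\E[e^{-s\t^{(k)}} \mid \xi = x] = e^{-\mu x}$ with $\mu$ and $s$ related by $s = \mu k + \mu^2/2$.

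The final step is to integrate over $\xi$ and read off the identity. Using independence of $\xi$ and $W$, and writing $s$ in place of $\mu$ as the free variable,
\beq
\hat f(s k + s^2/2) = \E\bigl[ e^{-(sk+s^2/2)\,\t^{(k)}} \bigr] = \E\Bigl[ \E\bigl[ e^{-(sk+s^2/2)\t^{(k)}} \,\big|\, \xi \bigr] \Bigr] = \E\bigl[ e^{-s\xi} \bigr] = \hat g(s),
\label{eq_final_step}
\eeq
where the first equality uses the hypothesis that $\xi$ solves \eqref{probl_Id}, so that $\t^{(k)}$ has distribution $F$ and hence $\E[e^{-u\t^{(k)}}] = \hat f(u)$ for all $u \ge 0$, applied with $u = sk + s^2/2$. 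This is precisely \eqref{eq_g_hat}.

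I expect the only real obstacle to be the rigorous justification of optional stopping in \eqref{eq_cond_LT}: one must confirm that $\t^{(k)} < \infty$ almost surely for every $k \ge 0$ (for $k > 0$ the drift pushes the boundary up, but relative to the process the effective drift is $-k < 0$, so the reflected/drifted Brownian motion hits $0$ a.s.; for $k = 0$ this is the standard fact that Brownian motion hits every level a.s.), and that the stopped exponential martingale is uniformly integrable or at least that dominated convergence applies along $t \uparrow \t^{(k)}$ — the bound $0 \le M_{t \wedge \t^{(k)}} \le e^{\theta x}$ for $\theta \ge 0$ handles this. Everything else is a routine Markov-property conditioning argument.
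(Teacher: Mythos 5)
Your proof is correct and essentially the same as the paper's: both rest on optional stopping of the exponential Wald martingale, the only difference being that you condition on $\xi = x$ first and then integrate, while the paper applies optional stopping directly to $M_t = \exp\bigl(-sX_t - s^2 t/2\bigr)$ using that $\xi$ is $\cF_0$-measurable, so that $\E[M_0] = \E[e^{-s\xi}]$ and $M_{\tau} = \exp\bigl(-\tau(sk+s^2/2)\bigr)$ directly. One small slip: the exponent in your displayed conditional Laplace transform should read $-x\bigl(-k + \sqrt{k^2+2s}\bigr)$ rather than $-x\bigl(k + \sqrt{k^2+2s}\bigr)$, but your subsequent reparametrization $\mu = -k + \sqrt{k^2+2s}$ gets the sign right, so the conclusion is unaffected.
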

\begin{proof}
The process $X_t$ is a martingale with respect to the  filtration $\{\cF_t\}_{t\ge 0}$.
Consider the exponential martingale $M_t = \exp\(-sX_t - s^2 t/2 \)$, $(s>0)$.
It is not difficult to show that if $\xi$ is finite almost surely then
the exponential martingale is bounded for all $t\le\t$. %% $\t^{(k)}$ is  bounded stopping time.
Therefore, by the optional stopping theorem,
$$ \E[M_\t] = \E[M_0].$$
Moreover, the expected value
$$ \E[M_0] = \E[e^{-s\xi}]. $$
Furthermore, he have $X_\t=k\,\t$ and
$$M_\t = \exp\(-\t\cdot (sk + s^2/2)\). $$
Therefore $\E[M_\t]=\hat f(ks+s^2/2)$.
Finally, we obtain Equation~(\ref{eq_g_hat}).
\end{proof}

\subsection{Gamma-distributed first hitting time}
In the case $f\in\Gamma_{\la, \gamma}$, the random variable $\xi$
solving Problem~(\ref{probl_Id})
admits the following simple probabilistic interpretation.
\begin{theorem} \label{theo_a_gamma}
Let
$ f(t) = p_\gamma(t)$,
for some $\gamma>0$.  %%  and
%% $$ \hat f(s) = \frac{\la^\gamma}{(\la+s)^\gamma}. $$
Suppose that $2\la \le k^2$.
Then $\hat g(s)$ satisfies
\beq
\hat g(s)= \frac{(k+\sqrt{k^2-2\la})^\gamma}{\(s+k +\sqrt{k^2-2\la} \)^\gamma}
\cdot  \frac{(k-\sqrt{k^2-2\la})^\gamma}{\(s+k -\sqrt{k^2-2\la} \)^\gamma}.
\label{eq_sol_gam}
\eeq
and the random variable
\beq
 \xi=\xi_1 + \xi_2,
\label{eq_rep_xi}
\eeq
where $\xi_1$ and $\xi_2$ are independent gamma-distributed random variables
with a common shape parameter $\gamma$:
$\xi_1\sim\Gamma(\gamma, \la_1)$ and
$\xi_2\sim\Gamma(\gamma, \la_2)$, with
\beq
\la_1=k-\sqrt{k^2-2\la}, \qquad \la_2=k +\sqrt{k^2-2\la}.
\label{eq_la12}
\eeq
\end{theorem}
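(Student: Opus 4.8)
The plan is to apply Theorem~\ref{theor_LT_boundary} directly and then recognize the resulting Laplace transform. Starting from $f = p_\gamma$, the Laplace transform of the Gamma$(\gamma, \la)$ density is $\hat f(s) = \la^\gamma / (\la + s)^\gamma$. Substituting $s \mapsto sk + s^2/2$ into this expression via (\ref{eq_g_hat}) gives
\[
\hat g(s) = \frac{\la^\gamma}{\(\la + sk + s^2/2\)^\gamma}
          = \frac{(2\la)^\gamma}{\(2\la + 2sk + s^2\)^\gamma}.
\]
The key algebraic step is to factor the quadratic $s^2 + 2ks + 2\la$ in the variable $s$. Its roots are $-k \pm \sqrt{k^2 - 2\la}$, so that $s^2 + 2ks + 2\la = (s + k - \sqrt{k^2-2\la})(s + k + \sqrt{k^2-2\la}) = (s + \la_1)(s + \la_2)$ with $\la_1, \la_2$ as in (\ref{eq_la12}). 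Here the hypothesis $2\la \le k^2$ is exactly what guarantees the discriminant $k^2 - 2\la$ is nonnegative, so both roots are real (and, since $k \ge 0$ and $\la_1 \la_2 = 2\la \ge 0$, both $\la_i$ are nonnegative, so each factor is a legitimate Laplace transform of a Gamma law). Also note $\la_1 + \la_2 = 2k$, which I would record for bookkeeping. Plugging the factorization back in and splitting $(2\la)^\gamma = \la_1^\gamma \la_2^\gamma$ yields
\[
\hat g(s) = \frac{\la_1^\gamma}{(s + \la_1)^\gamma} \cdot \frac{\la_2^\gamma}{(s + \la_2)^\gamma},
\]
which is precisely (\ref{eq_sol_gam}).

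For the probabilistic interpretation (\ref{eq_rep_xi}), I would observe that each factor $\la_i^\gamma/(s+\la_i)^\gamma$ is the Laplace transform $\E[e^{-s\xi_i}]$ of a Gamma$(\gamma, \la_i)$ random variable, and that the product of Laplace transforms corresponds to the distribution of the sum of independent random variables. Since the $\la_i$ are nonnegative, these are genuine probability distributions on $[0,\iy)$, so the independence assumption on $\xi_1, \xi_2$ combined with uniqueness of the Laplace transform identifies $\xi = \xi_1 + \xi_2$ as the (in-distribution) solution. One should also check $\hat g(0) = \la_1^\gamma \la_2^\gamma / (\la_1 \la_2)^\gamma = 1$, confirming $\hat g$ is the transform of a probability measure, and that $\hat g \in \cM$ — the latter follows from Proposition~\ref{prop_cm} applied to $\hat f \in \cM$ with $u(s) = sk + s^2/2$, whose derivative $u'(s) = k + s$ is completely monotone, so Theorem~\ref{theor_LT_boundary} is consistent.

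I do not anticipate a serious obstacle here: the entire argument is the substitution from Theorem~\ref{theor_LT_boundary} followed by factoring a quadratic. The only point requiring a little care is the role of the standing hypothesis $2\la \le k^2$, which is what keeps $\sqrt{k^2 - 2\la}$ real and hence keeps $\la_1, \la_2$ real and nonnegative; without it the factorization would involve complex roots and the clean decomposition into a sum of two independent Gamma variables would fail (this is presumably why a separate treatment is needed in the complementary regime $2\la > k^2$, and is related to the counterexamples alluded to in the introduction). A secondary subtlety worth a remark is that Theorem~\ref{theor_LT_boundary} only asserts that \emph{if} a solution $\xi$ exists then $\hat g$ has the stated form; to conclude (\ref{eq_rep_xi}) as an actual solution one invokes the converse direction, namely that the explicit $\xi = \xi_1 + \xi_2$ so constructed does produce first hitting time distribution $F$ — but since $\hat g$ as computed is manifestly the Laplace transform of a bona fide nonnegative random variable, this converse is immediate from the reversibility of the optional-stopping computation in the proof of Theorem~\ref{theor_LT_boundary}.
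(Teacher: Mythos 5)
Your proof is correct and follows essentially the same route as the paper's: substitute $s \mapsto ks + s^2/2$ into the Gamma Laplace transform, factor the resulting quadratic $s^2 + 2ks + 2\la$ using its real roots under the hypothesis $k^2 \ge 2\la$, and recognize the product of two Gamma Laplace transforms, with $\la_1 \la_2 = 2\la$ supplying the normalization. Your closing observation — that Theorem~\ref{theor_LT_boundary} as stated only gives a necessary form for $\hat g$, and that one should invoke the reversibility of the optional-stopping computation to confirm the constructed $\xi = \xi_1 + \xi_2$ actually produces first hitting time law $F$ — is a genuine subtlety that the paper's proof passes over silently, and is a worthwhile addition.
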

\begin{proof}
 We have
$$ \hat f(s) = \frac{\la^\gamma}{(\la+s)^\gamma}. $$
From (\ref{eq_g_hat}) we obtain
$$ \hat g(s) = \(  \frac{2\la }{s^2 + 2ks + 2\la }  \)^\gamma. $$
If $k^2\ge 2\la$ the quadratic equation
$$ s^2 + 2k s + 2\la=0$$
has real roots $-\la_1$ and $-\la_2$.
Then, taking into account that $\la_1 \la_2 = 2\la$, we find
$$
    \hat g(s) = \( \frac{\la_1}{\la_1+s}\)^\gamma \cdot
               \( \frac{\la_2}{\la_2+s}\)^\gamma .
$$
The additive representation (\ref{eq_rep_xi}) for $\xi$ follows immediately
from the latter equation. \end{proof}
\begin{rmk}
 It follows from Theorem~\ref{theo_a_gamma} that if the GSP
 has a solution for $k=k_\ast$, given $f(t)\in \Gamma_{\la, \gamma}$, then
it also has a solution for any $k> k_\ast$. We will generalize this property for
a larger class of distributions in the next section.
\end{rmk}

 Theorem~\ref{theo_a_gamma} can also be generalized in the following direction.
Consider a class of random variables $\eM_\Gamma$ which are mixtures of gamma-distributed
random variables with respect to shape $\gamma$ and scale $\la$ parameters. Write,
$$\t=\Mix\(\Gamma_{\la, \ga}, \mu\), $$ where $\mu$ is a mixing measure having a
support, $(\la, \gamma)\in S=[0, \la_\ast]\times [0,\iy)$, $\mu\(S\)=1$, and
$\la_\ast=k^2/2$. Then
$$\hat f(s) \bydef \E\bigl[ e^{-s\t}\bigr]=
\int_S \frac{\la^\gamma}{(\la+s)^\gamma}\d \mu(\la, \gamma). $$
In this case
  \beq \hat g(s) = \int_S \frac{\la_1^\gamma(\la)}{(\la_1(\la)+s)^\gamma}
  \frac{\la_2^\gamma(\la)}{(\la_2(\la)+s)^\gamma} \d \mu(\la, \gamma), \label{eq_mix_gam_sol}
\eeq
where $\la_1(\la)$ and $\la_2(\la)$ satisfy (\ref{eq_la12}).
Therefore,  we see that the random variable $\xi$ which leads to the hitting time distribution $\Mix\(\Gamma_{\la,\ga},\mu\)$ is in fact a  mixture of the sums of independent gamma-distributed  random variables.

This generalization is interesting because it is known that the class of finite mixtures of gamma-distributed random variables
$\eta\in\eM_\Gamma$ form a dense set
in the space of random variables. Moreover, an arbitrary random variable $\eta_*$ can be obtained as a weak limit as the number of gamma distributions $n\to\iy$,
$$ \eta_n \tow\;\eta_*, \qquad \eta_n\in\eM_\Gamma . $$
Unfortunately, this sequence of approximations can not be used to extend  the solution
(\ref{eq_mix_gam_sol}) through weak convergence because the parameter $\la$ has a bounded
support. In particular, we cannot approximate constant almost surely random variables using
the random variables, $\eta_n\in \eM_\Gamma$.

\subsection{Stable distribution of the first hitting time}

So far, we have considered the case when $\t$ has finite mean, $\E\,\t<+\iy$.
However, it is also interesting to analyze the case of one-sided
stable distributions with infinite expected value. For this case, we take the slope coefficient $k=0$.
Then, if one takes the random variable $\t$ such that
$$\E\bigl[ e^{-s\t}\bigr] =e^{- c\,s^\al}, \quad c>0, \quad 0<\al<1,  $$
then it follows from (\ref{eq_g_hat}) that
$$ \hat g(s) = \exp\(-c_1 s^{\be}\), $$
where $\be=2\al$ and $c_1 = \frac{c}{2^\al}$. Thus, we obtain
\begin{corollary}\label{cor_stab_tau}
If $\t$ has a stable distribution with the parameter $\al$, $0<\al<1/2$,
then $\xi$ has a stable distribution with the parameter $2\al$.
\end{corollary}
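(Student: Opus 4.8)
\noindent\emph{Proof plan.} The plan is to obtain the statement by specializing Theorem~\ref{theor_LT_boundary} to the flat boundary $k=0$ and then recognizing the resulting Laplace transform. I would first recall the standard characterization of one-sided stable laws: a nonnegative random variable $\eta$ is stable with index $\be\in(0,1)$ if and only if $\E[e^{-s\eta}]=e^{-cs^{\be}}$ for some $c>0$. That such an expression is a genuine Laplace transform of a probability law on $\R_+$ follows from Bernstein's theorem together with Proposition~\ref{prop_cm}: the map $s\mapsto s^{\be}$ is nonnegative with completely monotone derivative $\be s^{\be-1}$ whenever $\be<1$, and $e^{-x}\in\cM$, so $e^{-cs^{\be}}\in\cM$ with value $1$ at $s=0$.

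Next I would set $k=0$ in~(\ref{eq_g_hat}), which reduces it to $\hat g(s)=\hat f(s^2/2)$. Substituting the stable Laplace transform $\hat f(s)=e^{-cs^{\al}}$, with $0<\al<1/2$, yields
\[
\hat g(s)=\hat f(s^2/2)=e^{-c(s^2/2)^{\al}}=e^{-c_1 s^{\be}},\qquad \be=2\al,\quad c_1=c\,2^{-\al}.
\]
Since $\al<1/2$ forces $0<\be<1$, the first paragraph identifies $\hat g$ as the Laplace transform of a one-sided stable law of index $\be=2\al$; in particular it is a legitimate probability transform and the corresponding $\xi$ is supported on $[0,\iy)$, matching the standing hypothesis on $\xi$ in the GSP. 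Uniqueness of Laplace transforms then gives that any solution $\xi$ of Problem~(\ref{probl_Id}) for this $\t$ must be $2\al$-stable.

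Finally, to confirm that this $\xi$ really does solve the GSP (so that the claimed closed-form solution exists, not just its shape), I would run the argument of Theorem~\ref{theor_LT_boundary} backwards by subordination: conditionally on $\xi=a>0$ the hitting time $\t^{(0)}$ is the first passage time of the Wiener process $W$ to the level $-a$, whose Laplace transform is $e^{-a\sqrt{2s}}$, so $\hat f(s)=\E[e^{-\xi\sqrt{2s}}]=\hat g(\sqrt{2s})=e^{-c_1(2s)^{\al}}=e^{-cs^{\al}}$ upon using $c_1=c\,2^{-\al}$; equivalently $\t^{(0)}$ is a $1/2$-stable subordinator evaluated at the independent time $\xi$, and composing a $1/2$-stable subordinator with an independent $2\al$-stable one yields an $\al$-stable law. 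The only delicate point is the admissibility of $e^{-c_1 s^{2\al}}$ as the Laplace transform of a probability measure, and this is exactly where the restriction $\al<1/2$ (equivalently $2\al<1$) is needed; everything else is a short computation.
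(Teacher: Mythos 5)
Your proof is correct and follows essentially the same route as the paper: specialize Equation~(\ref{eq_g_hat}) to $k=0$, so $\hat g(s)=\hat f(s^2/2)$, and substitute the one-sided stable transform $\hat f(s)=e^{-c s^{\al}}$ to obtain $\hat g(s)=e^{-c\,2^{-\al} s^{2\al}}$, exactly the computation the paper carries out in the paragraph preceding the corollary. The two things you add---the admissibility check of $e^{-c_1 s^{2\al}}$ as a Laplace transform via Bernstein's theorem and Proposition~\ref{prop_cm} (which is precisely where $\al<1/2$ enters), and the subordination argument identifying $\t^{(0)}$ as a $1/2$-stable subordinator run for the independent $2\al$-stable time $\xi$---are welcome extra rigor that the paper leaves implicit, but they do not change the underlying argument.
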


\section{Structure of the solutions to Shiryaev's problem}\label{sec_str_solut}
\subsection{General results}
Theorem~\ref{theo_a_gamma} describes the solutions to the Shiryaev's problem
when
the first hitting time $\t$ is  gamma-distributed. In this case, it can be shown that the condition
$ k\ge k_\ast=\sqrt{2\,\la} $
is necessary and sufficient for existence of the random variable $\xi$ (see \cite{JKW} and \cite{JKV_rand}).
Thus if the problem has a solution for the slope $k_\ast$ then it has a solution
for any $k>k_\ast$. The following statement generalizes this result.
\begin{theorem}
\label{theo_str_sol_gen} Given $\t$,
suppose there exist $k_\ast>0$ and $\xi_\ast=\xi(k_\ast)$  such that
Equation~\rm{(\ref{eq_g_hat})} is satisfied:
$$ \hat g_\ast(s)\bydef  \E\bigl[ e^{-s\xi_\ast}\bigr]
= \hat f\(k_\ast s + s^2/2\). $$
Then for any $k>k_\ast$ there exists a random variable $\xi(k)$ such that
\beq
\E\bigl[ e^{-s\xi(k)}\bigr] = \hat f\( ks+s^2/2\), \quad s\ge 0.
\label{eq_rel_xi_k}
\eeq
\end{theorem}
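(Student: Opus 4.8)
The plan is to verify that the right-hand side of~(\ref{eq_rel_xi_k}), i.e.\ the function $s\mapsto\hat f\(ks+s^2/2\)$, is completely monotone and equal to $1$ at $s=0$. Once this is done, Bernstein's Theorem (recalled above) supplies a distribution function whose Laplace transform it is, and $\xi(k)$ may be taken to be any random variable with that distribution; then~(\ref{eq_rel_xi_k}) holds by the definition~(\ref{eq_def_fh}). The only input we are allowed to use is that $\hat g_\ast$, the Laplace transform of $\xi_\ast$, already lies in $\cM$ with $\hat g_\ast(0)=1$, and that $\hat g_\ast(s)=\hat f\(k_\ast s+s^2/2\)$.

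The crux is an algebraic substitution. For $k>k_\ast$ put
\beq
\chi(s)=\sqrt{k_\ast^2+2ks+s^2}\;-\;k_\ast,\qquad s\ge 0.
\label{eq_chi_def}
\eeq
A direct computation gives $k_\ast\,\chi(s)+\tfrac12\chi(s)^2=ks+\tfrac12 s^2$, so that
$$
\hat f\(ks+\tfrac12 s^2\)=\hat f\(k_\ast\chi(s)+\tfrac12\chi(s)^2\)=\hat g_\ast\(\chi(s)\),
$$
and it suffices to show $\hat g_\ast\circ\chi\in\cM$. I would obtain this from Proposition~\ref{prop_cm}, applied with $G=\hat g_\ast\in\cM$ and $u=\chi$. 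From~(\ref{eq_chi_def}), $\chi(s)\ge 0$ on $[0,\iy)$ (indeed $\chi(0)=0$, since $k_\ast^2+2ks+s^2\ge k_\ast^2$), so the remaining hypothesis to check is $\chi'\in\cM$.

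Establishing $\chi'\in\cM$ is the step I expect to be the main obstacle, and it is exactly here that $k>k_\ast$ is needed. Write $c=k^2-k_\ast^2$, so that $k_\ast^2+2ks+s^2=(s+k)^2-c$ and
$$
\chi'(s)=\frac{s+k}{\sqrt{(s+k)^2-c}}=\Bigl(1-\frac{c}{(s+k)^2}\Bigr)^{-1/2}
=\sum_{n\ge 0}\binom{2n}{n}\frac{c^n}{4^n}\,\frac1{(s+k)^{2n}}.
$$
Since $k>k_\ast>0$ we have $0<c<k^2$, hence $c/(s+k)^2\le c/k^2<1$ uniformly for $s\ge 0$; consequently the series, and each of its term-by-term derivatives, converges uniformly on $[0,\iy)$. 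Every summand is a non-negative multiple of $(s+k)^{-2n}$, a power of the completely monotone function $(s+k)^{-1}$ and hence itself completely monotone, so $(-1)^m(\chi')^{(m)}\ge 0$ for every $m$, i.e.\ $\chi'\in\cM$. (Alternatively, from $\int_0^\iy e^{-ut}I_0(\sqrt c\,t)\d t=(u^2-c)^{-1/2}$ one gets $\chi'(s)=1+\int_0^\iy e^{-st}e^{-kt}\sqrt c\,I_1(\sqrt c\,t)\d t$, visibly in $\cM$.) For $k\le k_\ast$ the argument breaks down --- $\chi'$ is then non-decreasing --- in agreement with the fact that the theorem only asserts extension to larger slopes.

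Putting the pieces together: $\chi\ge 0$ and $\chi'\in\cM$ yield $\hat g_\ast\circ\chi\in\cM$ by Proposition~\ref{prop_cm}, and $(\hat g_\ast\circ\chi)(0)=\hat g_\ast(0)=1$. By Bernstein's Theorem there is a distribution function on $[0,\iy)$ whose Laplace transform is $s\mapsto\hat g_\ast(\chi(s))=\hat f\(ks+s^2/2\)$; letting $\xi(k)$ be any random variable with that distribution gives~(\ref{eq_rel_xi_k}).
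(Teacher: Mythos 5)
Your proof is correct and follows the paper's overall strategy: you make the same change of variable $\chi(s)=\sqrt{k_\ast^2+2ks+s^2}-k_\ast$ (the paper calls it $u$), verify the same identity $k_\ast\chi+\chi^2/2=ks+s^2/2$ so that $\hat f(ks+s^2/2)=\hat g_\ast(\chi(s))$, and then invoke Proposition~\ref{prop_cm} plus Bernstein's Theorem exactly as the paper does. The one step you handle differently is the proof that $\chi'\in\cM$. The paper differentiates the implicit relation $u'(s)\bigl(u(s)+k_\ast\bigr)=s+k$ repeatedly, sets up a recursion $u^{(n)}(u+k_\ast)=-\sum_j C_{n,j}u^{(j)}u^{(n-j)}$ with positive coefficients $C_{n,j}$, and concludes by induction that the derivatives of $u$ alternate in sign. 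You instead write $\chi'(s)=\bigl(1-c/(s+k)^2\bigr)^{-1/2}$ with $c=k^2-k_\ast^2>0$ and expand it as a binomial series $\sum_{n\ge0}\binom{2n}{n}(c/4)^n(s+k)^{-2n}$; since $c/(s+k)^2\le c/k^2<1$ uniformly for $s\ge0$, the series and all its term-by-term derivatives converge uniformly, and each summand is a nonnegative multiple of the completely monotone $(s+k)^{-2n}$. Your variant makes the role of $k>k_\ast$ more transparent (it is exactly the condition $c>0$ giving a nonnegative series) and avoids tracking the combinatorics of the coefficients $C_{n,j}$; the paper's recursion, on the other hand, does not require knowing a closed form for $u'$ and is closer in spirit to a direct verification. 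The Bessel-function representation you mention in passing is equivalent and equally valid.
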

\begin{proof} The function $\hat g(s)= \hat f\( ks+s^2/2\)$
satisfies the relation $\hat g(0)=1$. Therefore, it is enough
to prove that $\hat g\in\cM$. Let $k>k_\ast$. Consider the function
$$ u(s)=\sqrt{s^2 +2ks + k_\ast^2 } - k_\ast. $$
Obviously,
$ u: \R_+ \to \R_+$,
and
$$ \frac{s^2}{2} +ks = \frac{u^2(s)}{2} + k_\ast u(s), \quad s\ge 0.$$
The first derivative of $u(s)$ is
$$ \frac{\d}{\d s} u(s)\equiv u^\prime(s) =
   (s+k)\cdot \(s^2+2ks+k_\ast^2\)^{-\frac12}.$$
Let us prove that the function $u^\prime(s)\in\cM$. We have
\beq
  u^\prime(s) \cdot\(u(s)+k_\ast\) = s+k.
\label{eq_fdu}
\eeq
Then if $k>k_\ast$, we derive from (\ref{eq_fdu}) the inequality
\beq
    u^\prime(s) > 1.
\label{eq_ineq_fd}
\eeq
Denote $n^{th}$ derivative of the function $u(s)$ by $ u^{(n)}(s)$. $(n=1, 2, \dots)$.
Differentiating Equation~(\ref{eq_fdu}) we find
\beq
    u^{(2)}(s)\cdot\(u(s) + k_\ast\) + \( u^\prime(s) \)^2=1.
\label{eq_u2}
\eeq
The latter Equation and (\ref{eq_ineq_fd}) imply the inequality
$$ u^{(2)}(s)<0, \quad s>0.$$
Denote $m=\lceil n/2\rceil$.
From Equation~(\ref{eq_u2}) we derive
\beq
u^{(n)}(s) \cdot \(u(s) + k_\ast\) = -\sum_{j=1}^{m} C_{n, j} u^{(j)}(s)\cdot u^{(n-j)}(s),
\label{eq_nder_u}
\eeq
The coefficients $C_{n,j}$ satisfy the relations
\begin{eqnarray*}
  C_{n+1, 1} &=& C_{n,1}+1, \quad n=2, 3, \dots,  \\
  C_{n+1, j} &=& C_{n, j} + C_{n, j-1}, \quad n=2,3, \dots, j = 2, \dots,  m-1.
\end{eqnarray*}
Taking into account that $C_{2,1}=1$, we find
\beq
    C_{n, j}= {n\choose j}, \quad j=1, 2, \dots, m-1. \label{eq_Cnj}
\eeq
The coefficient $C_{n, m}$ satisfies the relation
\beq
C_{n, m}=\begin{cases} {n\choose m}, &\text{if $n$ is odd, }\\
               \frac12 {n\choose m}, &\text{otherwise,}\end{cases}
\label{eq_Cnm}
\eeq
Thus, $C_{n, j}>0$ in Equation~(\ref{eq_nder_u}). Then from (\ref{eq_nder_u})
we prove by induction that
$$(-1)^{n+1} u^{(n)}(s)>0\quad\text{ for all $n=1, 2, \dots$.} $$
 Therefore $u^\prime(s)\in\cM$.
Then we have
$$ \hat g(s) =\hat f\( \frac{u^2(s)}{2} + k_\ast u(s)\) =
\hat g_\ast\(u(s) \). $$
Since $\hat g_\ast\in\cM$ and $u^\prime(s)\in\cM$ we derive from
Proposition~\ref{prop_cm} that
$ \hat g(s)\in\cM$, as was to be proved.
\end{proof}

The next proposition demonstrates that there exist parameters of the GSP %%does not
such that the problem does not have a solution.
\begin{proposition} \label{lem_nex_sol} Suppose $\E[\t^2]<\iy$. Then
the Problem~(\ref{probl_Id}) does not have a solution
if $k<\sqrt{\E[\t]}/\si(\t)$.
\end{proposition}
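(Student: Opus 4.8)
The plan is to argue by contradiction using the Laplace-transform identity of Theorem~\ref{theor_LT_boundary}. Suppose $\xi$ solves Problem~(\ref{probl_Id}) for some slope $k$. Then $\hat g(s) = \hat f(ks + s^2/2)$ must be a genuine Laplace transform of a non-negative random variable, i.e. $\hat g \in \cM$ with $\hat g(0) = 1$. In particular $\hat g$ must be decreasing and convex on $\R_+$, which forces the constraint $-\hat g'(0) \ge 0$ and, more to the point, links the low-order derivatives of $\hat g$ at $s=0$ to the moments of $\xi$. The idea is that membership in $\cM$ is a strong positivity requirement, and small $k$ violates it already at the level of the second derivative at the origin.

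First I would record the moment expansions. Writing $m_1 = \E[\t]$, $m_2 = \E[\t^2]$ (both finite by hypothesis), we have $\hat f(s) = 1 - m_1 s + \tfrac12 m_2 s^2 + o(s^2)$ as $s\downarrow 0$. Substituting the argument $\phi(s) := ks + s^2/2$, which satisfies $\phi(0)=0$, $\phi'(0)=k$, $\phi''(0)=1$, and expanding,
\beq
\hat g(s) = 1 - m_1\Bigl(ks + \tfrac{s^2}{2}\Bigr) + \tfrac12 m_2 k^2 s^2 + o(s^2)
         = 1 - m_1 k\, s + \tfrac12\bigl(m_2 k^2 - m_1\bigr) s^2 + o(s^2).
\eeq
On the other hand, if $\hat g$ is the Laplace transform of a non-negative random variable $\xi$ with finite second moment, then $\hat g''(0) = \E[\xi^2] \ge (\E[\xi])^2 = (\hat g'(0))^2$, i.e. the coefficient of $s^2$ (times $2$) must dominate the square of the coefficient of $s$. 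Matching coefficients gives $\E[\xi] = m_1 k$ and $\E[\xi^2] = m_2 k^2 - m_1$, so the necessary inequality $\E[\xi^2] \ge (\E[\xi])^2$ becomes $m_2 k^2 - m_1 \ge m_1^2 k^2$, that is $k^2(m_2 - m_1^2) \ge m_1$. Since $m_2 - m_1^2 = \Var(\t) = \si^2(\t) > 0$, this rearranges to $k^2 \ge m_1/\si^2(\t)$, i.e. $k \ge \sqrt{\E[\t]}/\si(\t)$. Hence if $k < \sqrt{\E[\t]}/\si(\t)$ no such $\xi$ can exist, which is the claim.

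The main obstacle is making the moment matching rigorous rather than formal: I must justify that $\hat g$ genuinely has two derivatives at $s=0$ with the stated values, and that $\E[\xi^2]<\iy$ with $\hat g''(0) = \E[\xi^2]$. The cleanest route is to avoid differentiating $\hat g$ directly and instead work on the $\hat f$ side — since $\hat f$ is completely monotone and $\E[\t^2]<\iy$, it has a Taylor expansion to order two at the origin with an $o(s^2)$ remainder by a Tauberian/monotone-convergence argument (or simply because $\hat f^{(j)}(0) = (-1)^j m_j$ for $j\le 2$). Then substitute $\phi(s)$ and read off the quadratic expansion of $\hat g$ above; this expansion, valid as an asymptotic statement, is enough to conclude that $\hat g'(0)$ and $\hat g''(0)$ exist as one-sided derivatives with the indicated values, and a completely monotone function automatically satisfies $\hat g''(0) \ge (\hat g'(0))^2$ because it is the Laplace transform of a measure (apply Jensen / Cauchy–Schwarz to that measure, which has finite second moment precisely because $\hat g''(0)<\iy$). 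One should also note the boundary case: when $k = \sqrt{\E[\t]}/\si(\t)$ the inequality is tight, consistent with the Gamma example where $k_\ast = \sqrt{2\la}$ and $\E[\t]/\si^2(\t) = (\gamma/\la)/(\gamma/\la^2) = \la$, giving $k_\ast^2 = 2\la$ exactly — a useful sanity check.
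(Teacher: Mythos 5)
Your proposal is correct and takes essentially the same route as the paper: differentiate (or Taylor-expand) the identity $\hat g(s)=\hat f(ks+s^2/2)$ at $s=0$ to obtain $\E[\xi]=k\E[\t]$ and $\E[\xi^2]=k^2\E[\t^2]-\E[\t]$, then impose $\si^2(\xi)\ge 0$ (equivalently $\E[\xi^2]\ge(\E[\xi])^2$) to force $k^2\si^2(\t)\ge\E[\t]$. The paper phrases the final step as $k^2\si^2(\t)=\E[\t]+\si^2(\xi)\ge\E[\t]$, and the remarks you add about justifying the expansion and the Gamma sanity check are consistent with, and slightly more careful than, the paper's version.
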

\begin{proof}
Indeed, from Equation~\rm{(\ref{eq_g_hat})} we find
$$ \frac{\d \hat g(s)}{\d s} =  \frac{\d \hat f(ks+s^2/2)}{\d s} \cdot (k+s), $$
and
$$ \frac{\d^2 \hat g(s)}{\d s^2} =  \frac{\d^2 \hat f(ks+s^2/2)}{\d s^2} \cdot (k+s)^2
+  \frac{\d \hat f(ks+s^2/2)}{\d s}\,\,.  $$
Substituting $s=0$ into these equations, we find
\begin{eqnarray*}
     \E[\xi] &=& k\cdot \E[\t],  \\
   \E[\xi^2] &=& k^2 \E[\t^2] - \E[\t].
\end{eqnarray*}
The equations for the first  two moments of $\xi$ imply
$$ k^2 \si^2(\t) = \E[\t] + \si^2(\xi) \ge \E[\t]. $$
Finally, we obtain
$$ k\ge\frac{ \sqrt{\E[\t]}}{\si(\t) }. $$
Proposition~\ref{lem_nex_sol} is thus proved.
\end{proof}
\begin{rmk}Proposition~\ref{lem_nex_sol} provides only a necessary condition
for existence of a solution to Problem~(\ref{probl_Id}).
In particular, if $\t\sim \Gamma(\gamma, \la)$,
then the necessary and sufficient condition for existence is $k\ge \sqrt{2\la}$ and
the minimal solution corresponds to $k_\ast=\sqrt{2\,\la}$.
\end{rmk}
\begin{rmk}
If $\t$ is a constant with probability $1$, Equation~(\ref{eq_g_hat})
does not have a solution.
The GSP can not be solved using
our randomization approach in this case.
\end{rmk}

Theorem~\ref{theo_str_sol_gen} and Proposition~\ref{lem_nex_sol} imply
that the admissible set for the coefficient $k$ is either the semi-infinite interval
$[k_\ast, \iy)$ or an empty set.
If $\t$ has finite first two moments and the random variable $\xi$ exists for some
$k_\ast>0$ then for each $k>k_\ast$ there exists a random variable $\xi(k)$ solving
Problem~(\ref{probl_Id}). In this manner, we can obtain a family of solutions $\Xi=\{ \xi(k)\}$
parameterized by the slope coefficient $k$.
Furthermore, the variance of the random variable $\xi(k)$ is a monotone
function of $k$ and the one with minimal variance is what we call the
{\it minimal solution} to Problem~(\ref{probl_Id}). In particular, if $\t$ has an exponential distribution with parameter $\la$, then
the random variable $\xi_\ast$ corresponds to the minimal solution has slope
$k_\ast=\sqrt{2\la}$. In this case  the
random variable $\xi(k_\ast)$ is Erlang distributed with order two.

 \subsection{Esscher families}

Consider a non-negative random variable $\eta$ and
denote $\hat f_\eta(s)=\E[e^{-s\eta}], \,\,s\ge 0$ its Laplace transform.
It is convenient to introduce a family of random variables,
$$ \eE(\eta) = \Bigl\{ \eta_a:
      \E[e^{-s\eta_a}] = \hat f_\eta(a+s)/\hat f_\eta(a), \quad a>0 \Bigr\},
$$
which we call the {\it Esscher family generated by the random variable $\eta$}.
As before, let $\t$ be the first hitting time and $\hat f(s)=\E[e^{-s\t}]$ be its
Laplace transform. Suppose the generalized Shiryaev's Problem~(\ref{probl_Id})
has a solution $\xi=\xi(k_0)$ for some fixed slope $k_0$. Consider now two
Esscher families $\eE(\t)$ and $\eE(\xi)$ generated by $\t$ and $\xi$, respectively.
\begin{proposition} For each $k>k_0$ there exist $\al=\al(k)$ and $\be=\be(k)$
such that $\t_{\al}\in\eE(\t)$ and
$\xi_{\beta}\in \eE(\xi)$ and $(\t_{\al}, \xi_{\be}, k)$ is a solution to the
Generalized Shiryaev's problem.
\end{proposition}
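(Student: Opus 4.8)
The plan is to produce the parameters $\al(k),\be(k)$ explicitly by a ``completing the square'' computation and then to check that the triplet $(\t_\al,\xi_\be,k)$ satisfies the fundamental relation~(\ref{eq_g_hat}). Write $\hat g_0(s)\bydef\E[e^{-s\xi(k_0)}]$, so that the hypothesis reads $\hat g_0(s)=\hat f(k_0 s+s^2/2)$, $s\ge 0$. The first step is the algebraic observation that an affine shift of the argument of $\hat g_0$ again yields $\hat f$ evaluated at a quadratic in $s$: for any constant $\be\ge 0$,
$$
k_0(s+\be)+\tfrac12(s+\be)^2=\tfrac{s^2}{2}+(k_0+\be)\,s+\Bigl(k_0\be+\tfrac{\be^2}{2}\Bigr).
$$
So I would set $\be=\be(k):=k-k_0$, which is legitimate precisely because $k>k_0$ makes $\be>0$, and $\al=\al(k):=k_0\be+\be^2/2=(k^2-k_0^2)/2>0$. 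With these choices the displayed identity gives
$$
\hat g_0(s+\be)=\hat f\!\left(\al+ks+\tfrac{s^2}{2}\right),\qquad \hat g_0(\be)=\hat f(\al),
$$
the second equality being the first one evaluated at $s=0$.

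Next I would verify that $\t_\al\in\eE(\t)$ and $\xi_\be\in\eE(\xi)$ are bona fide members of the Esscher families. Since $\al>0$ and $\be>0$, the normalizing constants $\hat f(\al)$ and $\hat g_0(\be)$ are finite and strictly positive (Laplace transforms of nonnegative random variables at positive arguments), and the tilted transforms $s\mapsto\hat f(\al+s)/\hat f(\al)$ and $s\mapsto\hat g_0(\be+s)/\hat g_0(\be)$ are completely monotone with value $1$ at $s=0$; by Bernstein's theorem they are Laplace transforms of genuine distributions on $[0,\iy)$. By definition of the Esscher family, $\E[e^{-s\t_\al}]=\hat f(\al+s)/\hat f(\al)=:\hat f_{\t_\al}(s)$ and $\E[e^{-s\xi_\be}]=\hat g_0(\be+s)/\hat g_0(\be)$, so combining with the identities of the previous paragraph,
$$
\E\bigl[e^{-s\xi_\be}\bigr]=\frac{\hat g_0(\be+s)}{\hat g_0(\be)}=\frac{\hat f\!\left(\al+ks+s^2/2\right)}{\hat f(\al)}=\hat f_{\t_\al}\!\left(ks+\tfrac{s^2}{2}\right).
$$
This is exactly Equation~(\ref{eq_g_hat}) with $\t$ replaced by $\t_\al$ and $\xi$ by $\xi_\be$, so $(\t_\al,\xi_\be,k)$ is a solution of the GSP with slope $k$ in the sense used throughout Section~\ref{sec_str_solut} (cf.\ Theorem~\ref{theor_LT_boundary}).

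I do not expect a serious obstacle: the heart of the argument is the elementary fact that shifting $s$ inside $k_0 s+s^2/2$ by a positive constant reproduces a quadratic of the form $\al+ks+s^2/2$, and the only thing to watch is the bookkeeping that the shift parameters $\al$ and $\be$ come out strictly positive, so that the required elements actually lie in $\eE(\t)$ and $\eE(\xi)$. The one point I would state explicitly is the well-definedness of $\t_\al$ and $\xi_\be$ as honest nonnegative random variables via complete monotonicity and Bernstein's theorem; it is also worth noting that the construction is consistent with Theorem~\ref{theo_str_sol_gen}, since $\xi_\be$ is an explicit realization of the solution $\xi(k)$ produced there.
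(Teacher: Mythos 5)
Your proposal is correct and takes essentially the same route as the paper: the same choice $\be=k-k_0$ and $\al=k_0\be+\be^2/2=(k^2-k_0^2)/2$, the same completing-the-square identity, and the same verification of Equation~(\ref{eq_g_hat}) for the tilted pair $(\t_\al,\xi_\be)$. The only difference is that you spell out the well-definedness of the Esscher-tilted random variables via complete monotonicity and Bernstein's theorem, which the paper takes for granted.
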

\begin{proof} Denote $\Delta =k-k_0$, $(\Delta >0)$, and take
$\al=\Delta \cdot k_0 + \Delta^2/2$, $\be=\Delta$. Consider the random variable
$\xi_\be\in\eE(\xi)$. Then after simple transformations we find
\begin{equation*}
\gh_\be(s)=\frac{ \hat f\(\al + (k_0 + \Delta)\cdot s + s^2/2\)}{\hat f(\al)} %\\
%&=&
=  \frac{ \hat f\(\al + k s + s^2/2\)}{\hat f(\al)} %\\
%&=&
= \hat f_\al(ks+s^2).
\end{equation*}
Thus, the triplet $(\t_\al, \xi_\be, k)$ form a solution of the problem.
\end{proof}

\section{Connection to  Skorohod problem.}\label{sec_conn_Skorohod}

We are now in a position to construct a solution to the Skorohod problem through the solution to the Generalized Shiryaev's problem.
Suppose that the distribution of the
stopped process $X_\t$ has  a non-negative  support, $\Pp(X_\t\ge 0)=1$, and the continuous
distribution, $F_X(x)=\Pp(X_\t\le x)$, can be matched by a corresponding distribution of
the random variable $\xi$ for some $k>0$. Taking into account that $X_\t=k\,\t$ for the
linear boundary $b(t)=k\,t$, $(k>0)$, we immediately
obtain that $\Pp(\t\le t)=F_X(k\,t)$ and the Laplace transform of the density of $\t$ satisfies
$$\hat f_\t(s)=\hat f_X\(\frac{s}{k}\).$$
In this case
$$ \hat g(s)= \hat f_X\(\frac{s^2}{2k^2}+s\).$$
Similarly, the Skorohod problem can be solved for the distributions with $\Pp(X_\t<0)=1$.
In this case, the first hitting time is understood as
$$\t=\inf_{t>0}\{t: X_t> k\,t, \,\,\, k<0\}, $$
i.e., the first time the process touches the boundary from below. Clearly, the initial point $\xi$ must then have support on the negative real axis.

The solution to the general Skorohod problem -- without restrictions on the support
of the distribution of $X_\t$ --  can then be obtained as follows: Let us represent the random variable $X_\t$ as a mixture of the random variables $X_+>0$ and
$X_-<0$:
$$ X_\t = \begin{cases} X_{+}, & \text{ with probability $p_+=\Pp\(X_\t\ge 0\)$,} \\
      X_{-}, & \text{ with probability $p_-=1-p_+$.}
\end{cases}
$$
Suppose that the Skorohod problem for the random variables $X_+$ and $X_-$
can be solved by the boundaries
$b_+(t)=k_+\cdot t$ and $b_-(t)=k_-\cdot  t$ and the random variables $\xi_+$
and $\xi_-$, respectively (see Figure~\ref{fig:Skorohod_problem}).
Then we have the following
\begin{proposition}\label{sol_Skor}
The random variable
$$
    \xi=\begin{cases} \xi_+, & \text{ with probability $p_+$,} \\
      \xi_-,  & \text{ with probability $p_-$,} \end{cases}
$$
and the boundary $b(t)=b_+\cup b_-$
solve the Skorohod problem for the random variable $X_\t$.
\end{proposition}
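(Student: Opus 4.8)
The plan is to place the mixed data on a single probability space and reduce the combined embedding to the two given sub-embeddings by a coin flip performed at time zero. First, I would realise the two solutions $(\xi_+,\t_+,k_+)$ and $(\xi_-,\t_-,k_-)$ on one filtered space of the type described in Section~\ref{sec_randomiz}, driven by a common Wiener process $W$, with $\xi_+>0$ and $\xi_-<0$ a.s., each independent of $W$; this costs nothing, since the law of a Brownian motion started from a randomized point and stopped at a first hitting time depends only on the law of the starting point. Here $\t_+$ (respectively $\t_-$) is the first hitting time of $b_+$ by $\xi_++W$ (respectively of $b_-$ by $\xi_-+W$), and $k_+\t_+$ (respectively $k_-\t_-$) has the law of $X_+$ (respectively $X_-$) by the standing hypothesis. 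Next, I would enlarge the space by a Bernoulli variable $E$ with $\Pp(E)=p_+$, independent of everything else, set $\xi=\xi_+\One_E+\xi_-\One_{E^c}$ and $X_t=\xi+W_t$, and let $\t$ be the first hitting time of the boundary $b:=b_+\cup b_-$ by $X$. Since $\xi$ is independent of $W$, this construction again falls under the setting of Section~\ref{sec_randomiz}, with $\t$ adapted.

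The only step with real content is to check that, for this $\xi$, the geometric boundary $b=b_+\cup b_-$ --- the union of the two rays through the origin with slopes $k_+>0$ and $k_-<0$ --- produces exactly $\t=\t_+\One_E+\t_-\One_{E^c}$, and hence $X_\t=k_+\t_+\One_E+k_-\t_-\One_{E^c}$. The sign of $\xi$ is $\cF_0$-measurable and, since $\xi_+>0$ and $\xi_-<0$ a.s., it identifies the active branch. On $E$ the path $t\mapsto\xi_++W_t$ starts strictly above both rays; by continuity it remains strictly above $b_+$, and therefore --- because $k_-t<k_+t$ for $t>0$ --- strictly above $b_-$ as well, until the first time it meets $b_+$. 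A short argument (a Brownian motion passes strictly across any line immediately after first touching it) shows this time is precisely $\t_+$, at which $X_{\t_+}=k_+\t_+$. In particular the inactive ray $b_-$ is never reached before $\t_+$, so adjoining it to the boundary changes nothing; the mirror statement holds on $E^c$. One also uses $\t_\pm<\iy$ a.s., which is immediate because $k_\pm\t_\pm$ has the finite (indeed square-integrable) law of $X_\pm$.

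Conditioning on $E$ then closes the argument: for every bounded measurable $h$,
\begin{align*}
\E\bigl[h(X_\t)\bigr]
&=p_+\,\E\bigl[h(k_+\t_+)\bigr]+p_-\,\E\bigl[h(k_-\t_-)\bigr]\\
&=p_+\,\E\bigl[h(X_+)\bigr]+p_-\,\E\bigl[h(X_-)\bigr],
\end{align*}
so the stopped value is distributed as the prescribed mixture $p_+\,\Pp(X_+\in\cdot)+p_-\,\Pp(X_-\in\cdot)$, i.e.\ as the target $X_\t$; moreover $\E[\t]=p_+\,\E[\t_+]+p_-\,\E[\t_-]<\iy$, since each $\t_\pm$ is integrable as a solution of a Skorohod problem. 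Hence $(\xi,b)$ solves the Skorohod problem for $X_\t$. I expect the main --- essentially the only --- obstacle to be making the second paragraph rigorous: showing that, from an initial value whose sign is fixed at time $0$, first passage to the union $b_+\cup b_-$ really coincides with first passage to the active ray (so the inactive ray cannot be hit earlier), and that the enlarged construction still yields a bona fide Brownian motion with an adapted, integrable stopping time. The distributional identity and the integrability bound are then just the displayed conditioning.
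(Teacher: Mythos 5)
The paper states Proposition~\ref{sol_Skor} without any explicit proof: it relies on the preceding description (decompose $X_\tau$ by sign into $X_+$ and $X_-$, solve each one-sided randomized problem with a linear boundary, mix the starting points) and the figure/remark about hitting the wedge $\{k_-t<x<k_+t\}$ from outside. Your proof follows exactly that intended construction and is correct; it supplies the details the paper leaves implicit. In particular, the only nontrivial point --- that from a starting value whose sign is $\cF_0$-measurable, the first passage to $b_+\cup b_-$ coincides a.s.\ with the first passage to the active ray, so the inactive ray is inert --- is precisely what needs to be said, and your continuity argument (on $E$, $X_t>k_+t>k_-t$ for $0<t<\tau_+$, together with the immediate crossing property of Brownian motion at first contact with a line) is a clean way to nail it down. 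The conditioning identity at the end and the integrability check $\E[\t]=p_+\E[\t_+]+p_-\E[\t_-]<\infty$ are both routine and correct. So you have not taken a different route; you have written out the paper's route at the level of rigor the paper chose not to. One small remark: it might be worth stating explicitly, as you implicitly do, that this is the ``randomized'' Skorohod problem of the paper (one is free to also randomize $W_0=\xi$), since that is what makes the mixture construction an embedding of $X_\tau$ rather than of $W_\tau$.
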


\begin{figure}[tbp] % float placement: (h)ere, page (t)op, page (b)ottom, other (p)age
  \centering
  % file name: C:/Alex/TEX/FirstHittingTimeBMR/Skorohod/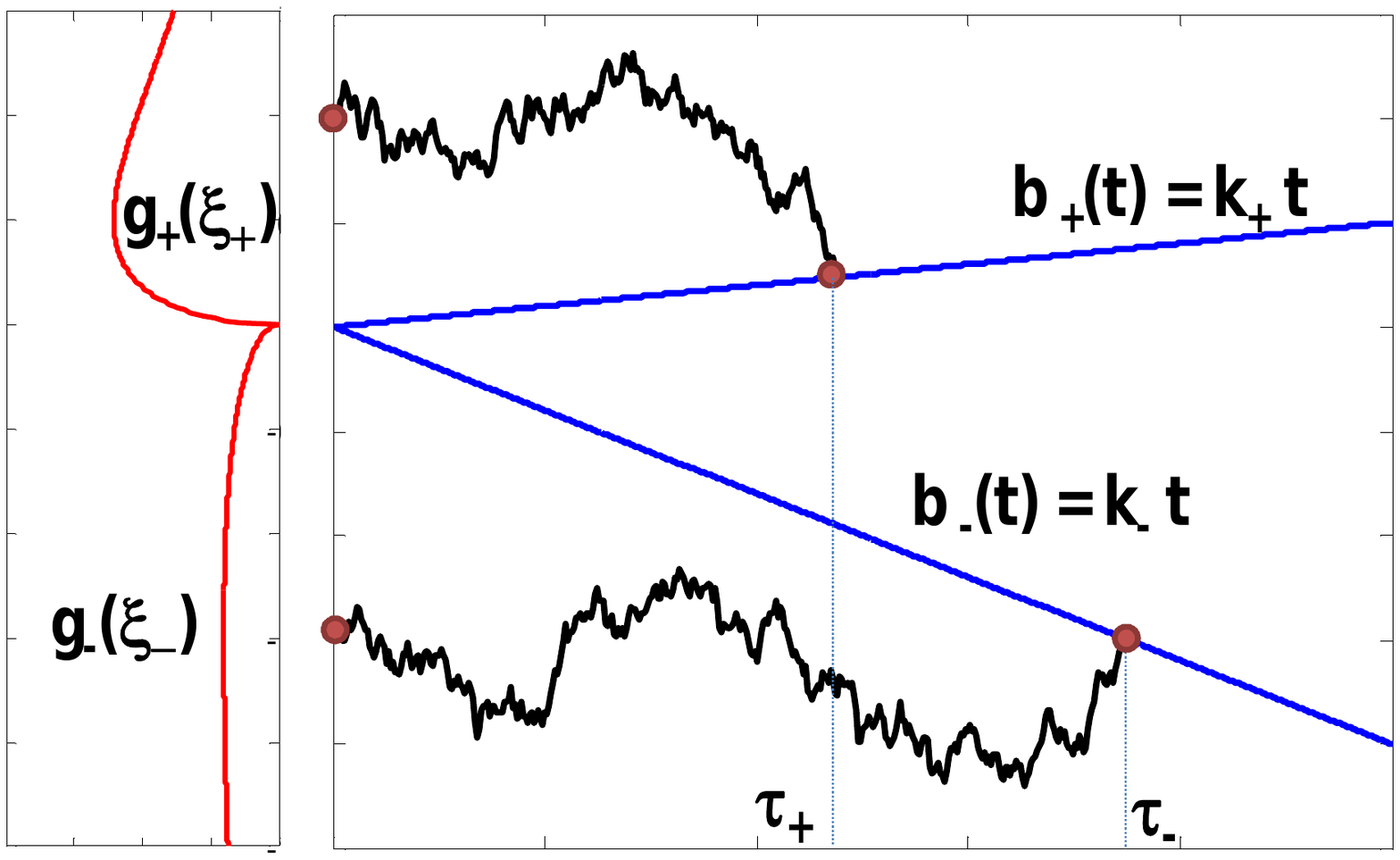
  \includegraphics[bb=71 240 562 544,width=5.07in,height=3.0in,keepaspectratio]{figure}
  \caption{Randomization in the Skorohod Problem}
  \label{fig:Skorohod_problem}
\end{figure}

\begin{rmk}
In this manner, the Skorohod problem is solved by randomizing the starting point of the Brownian motion over the entire real line,
and searching for the first hitting time when the process enters the
wedge region described by the lines $k_+\,t$ and $k_-t$.
\end{rmk}

 \section{Conclusion}
We have demonstrated that randomization of the initial state of the process is a very powerful tool for solving and tying together the Generalized Shiryaev's Problem and the
Skorohod Embedding Problem. The randomization of the initial state of the process allows us to
reduce the problem to the linear boundary case and obtain closed-form solutions for several important classes of the distribution of the first hitting time.

This linearization of the boundary makes the relation between the Shiryaev's and Skorohod problems transparent.  Furthermore, it allows us to develop a new  solution to the Skorohod problem.

We close this paper with a brief discussion of the directions for future research.
In the present
paper we discussed the case of a scalar process $W_t$ whose distribution of the
first hitting time must be matched. One interesting open question is to prove existence of the
solution to the Generalized Shirayaev's problem when $\t$ has a continuous infinitely divisible distribution. Another interesting generalization that requires much attention is the Generalized Shirayaev's problem for a vector-valued process $W_t$. Such generalization will prove extremely useful in application settings as well as providing an interesting mathematical playground. %% is

\clearpage

\bibliographystyle{elsarticle-harv}
\bibliography{Skorohod_JKV}

\begin{thebibliography}{26}
\expandafter\ifx\csname natexlab\endcsname\relax\def\natexlab#1{#1}\fi
\expandafter\ifx\csname url\endcsname\relax
  \def\url#1{\texttt{#1}}\fi
\expandafter\ifx\csname urlprefix\endcsname\relax\def\urlprefix{URL }\fi

\bibitem[{Alili and Patie(2005)}]{Alili05}
Alili, L., Patie, P., 2005. On the first crossing times of a {Brownian} motion
  and a family of continuous curves. C.R. Acad. Sci. Paris Ser.I340, 225--228.

\bibitem[{Anulova(1980)}]{Anul}
Anulova, S., 1980. On markov stopping times with a given distribution for a
  {Wiener} process. Theory Probab. Applications 5, 362--366.

\bibitem[{Avellaneda and Zhu(2001)}]{Avel}
Avellaneda, M., Zhu, J., 2001. Distance to default. Risk 14(12), 125--129.

\bibitem[{Breiman(1967)}]{Breiman66}
Breiman, L., 1967. First exit times from a square root boundary. In Fifth
  Berkeley Symposium 2(2), 9--16.

\bibitem[{Chadam et~al.(2006)Chadam, Cheng, Chen, and Saunders}]{Saun}
Chadam, J., Cheng, L., Chen, X., Saunders, D., 2006. Analysis of an inverse
  first passage problem from risk management. SIAM Journal on Mathematical
  Analysis 38(3), 845--873.

\bibitem[{Dudley and Gutmann(1977)}]{Dudley}
Dudley, R., Gutmann, S., 1977. Stopping times with given laws. Sem. de
  Probab.XI, Lecture Notes in Math. 11, 51--58.

\bibitem[{Durbin(1971)}]{Durb}
Durbin, J., 1971. Boundary-crossing probabilities for the brownian motion and
  {Poisson} processes and techniques for computing the power of the
  {Kolmogorov-Smirnov} test. Journal of Applied Probability 8(3), 431--453.

\bibitem[{Feller(1971)}]{Feller}
Feller, W., 1971. An introduction to probability theory and its applications,
  vol.{II}. John Wiley \& Sons New York.

\bibitem[{Hobson et~al.(1999)Hobson, Williams, and Wood}]{HobWil}
Hobson, D.~G., Williams, D., Wood, A., 1999. Taylor expansions of
  curve-crossing probabilities. Bernoulli 5(5), 779--795.

\bibitem[{Hull and White(2001)}]{HW}
Hull, J., White, A., 2001. Valuing credit default swaps {II}: modeling default
  correlations. Journal of Derivatives 8(3), 29--40.

\bibitem[{Iscoe and Kreinin(1999)}]{IsKr}
Iscoe, I., Kreinin, A., 1999. Default boundary problem. Technical report
  Algorithmics Inc.

\bibitem[{Iscoe et~al.(1999)Iscoe, Kreinin, and Rosen}]{IsKrRos}
Iscoe, I., Kreinin, A., Rosen, D., 1999. Integrated market and credit risk
  portfolio model. Algorithmics Research Quarterly 2(3), 21--38.

\bibitem[{Jackson et~al.(2009)Jackson, Kreinin, and Zhang}]{JKW}
Jackson, K., Kreinin, A., Zhang, W., 2009. Randomization in the first hitting
  time problem. Statistics and Probability Letters.

\bibitem[{Jaimungal et~al.(2009{\natexlab{a}})Jaimungal, Kreinin, and
  Valov}]{JKV_integr}
Jaimungal, S., Kreinin, A., Valov, A., 2009{\natexlab{a}}. Integral equations
  and the first passage time of {Brownian} motions. arXiv:0902.2569.

\bibitem[{Jaimungal et~al.(2009{\natexlab{b}})Jaimungal, Kreinin, and
  Valov}]{JKV_rand}
Jaimungal, S., Kreinin, A., Valov, A., 2009{\natexlab{b}}. Randomized first
  passage times. arXiv:0911.4165v1.

\bibitem[{Karatzas and Shreve(2005)}]{Karatz}
Karatzas, I., Shreve, S.~E., 2005. Brownian motion and stochastic calculus.
  Springer-Verlag New York.

\bibitem[{Khintchine(1933)}]{khint}
Khintchine, A.~Y., 1933. Asymptotische gesetze der wahrscheinlichkeits
  rechnuung. Springer Berlin.

\bibitem[{Lerche(1986)}]{Lerch}
Lerche, H., 1986. Boundary crossing of {Brownian} motion. Lecture Notes in
  Stat. 40, Springer--Berlin.

\bibitem[{Novikov(1981)}]{Novikov81}
Novikov, A., 1981. Martingale approach to first passage problems for nonlinear
  boundaries. Proceedings of the Steklov Inst. of Math. 158, --.

\bibitem[{Obloj(2004)}]{obloj}
Obloj, J., 2004. The {Skorokhod} problem and its offspring. Preprint 886,
  Laboratoire de Probabilites et Modeles Aleatoires Univer. Paris VI \& VII.

\bibitem[{Peskir(2002)}]{Pesk}
Peskir, G., 2002. On integral equations arising in the first-passage problem
  for {Brownian} motion. Journal Integral Equations. Appl. 14(4), 397--423.

\bibitem[{Peskir and Shiryaev(2006)}]{PeskShir}
Peskir, G., Shiryaev, A., 2006. Optimal stopping and free-boundary problems.
  Lectures in Mathematics. Birkhauser, --.

\bibitem[{Salminen(1988)}]{Salm88}
Salminen, P., 1988. On the first hitting time and the last exit time for a
  {Brownian} motion to/from a moving boundary. Adv. Appl. Probab. 20(2),
  411--426.

\bibitem[{Schmidt and Novikov(2008)}]{Novikov08}
Schmidt, T., Novikov, A., 2008. A structural model with unobserved boundary.
  Applied Mathematical Finance 15(2), 183--203.

\bibitem[{Shepp(1967)}]{Shepp67}
Shepp, L., 1967. A first passage problem for the {Wiener} process. Ann. Math.
  Stat. 38(6), 1912--1914.

\bibitem[{Skorohod(1965)}]{Skoroh}
Skorohod, A., 1965. Studies in the theory of random processes. {Translated from
  the Russian by Scripta Technica, Inc.} Addison-Wesley Publishing Co., Inc.,
  Reading, Mass.

\end{thebibliography}

\end{document}